\newtheorem{thm}{Theorem}%[section]
\newtheorem{corollary}[thm]{Corollary}
\newtheorem{example}[thm]{Example}
\newtheorem{definition}[thm]{Definition}
\newtheorem{remark}[thm]{Remark}
\newtheorem{prop}[thm]{Proposition}
\newtheorem{claim}[thm]{Claim}
\newtheorem{observation}[thm]{Observation}
\newtheorem{question}[thm]{question}
\newenvironment{Example}{\begin{example}\rm}{\end{example}}
\newenvironment{Remark}{\begin{remark}\rm}{\end{remark}}
\def\et{\quad\mbox{and}\quad}
\def\R{\mathbb{R}}
\def\epsilon{\varepsilon}
\def\s{{\sigma}}
\def\b{{\rm{b_1}}}
\def\l{{l}}
\begin{document}
%\addtolength{\baselineskip}{+.3\baselineskip}
%\date{\today}
\author{Peter Feller}
\email{peter.feller@math.ch}
\thanks{The author gratefully acknowledges support by the Swiss National Science Foundation Grant~155477.}%: \emph{Positive Braids and Deformations}.}
\keywords{Positive braids, signature, Goeritz form, Gordon-Litherland pairing, topological slice genus}
\subjclass[2010]{57M25,  57M27}
\address{ETH Z{\"u}rich, R{\"a}mistrasse 101, 8092 Z{\"u}rich, Switzerland}
%\urladdr{}
%\thanks{The author is supported by the Swiss National Science Foundation (project no.\ 137548).}
\title{A sharp signature bound for positive four-braids}%{The signature of positive braids via Goeritz matrices}
\begin{abstract}
We provide the optimal linear bound for the signature of positive four-braids in terms of the three-genus of their closures.
As a consequence, we improve previously known linear bounds for the signature in terms of the first Betti number for all positive braid links.
We obtain our results by combining bounds for positive three-braids with Gordon and Litherland's approach to signature via unoriented surfaces and their Goeritz forms.
Examples of families of positive four-braids for which the bounds are sharp are provided.

\end{abstract}
\maketitle

\section{Introduction}
This paper is concerned with \emph{positive braid knots and links}\textemdash the knots and links obtained as the closure $\widehat{\beta}$ of positive braids $\beta$\textemdash and the following two link invariants: the \emph{first Betti number} $\b(L)$ of a link $L$\textemdash the minimal first Betti number of oriented surfaces in $\R^3$ with oriented boundary $L$\textemdash and the \emph{signature} $\s(L)$ of a link $L$ as introduced by Trotter~\cite{Trotter_62_HomologywithApptoKnotTheory} and (for links) by Murasugi~\cite{Murasugi_OnACertainNumericalInvariant}. Let a link $L$ be the closure of a non-trivial positive braid\textemdash a positive braid such that its closure is not an unlink. We suspect (as conjectured in~\cite{Feller_14_Sigofposbraids}) that %, if $L$ is not an unlink, then
\begin{equation}\label{eq:1/2conj}\b(L)\geq{-\s}(L)
>\frac{\b(L)}{2},\end{equation}
where the first inequality is immediate from the definition of the signature.
This article establishes~~\eqref{eq:1/2conj} for closures of positive $4$--braids. %, i.e.~we prove
\begin{thm}\label{thm:1/2for4braids}
Let $\beta$ be a positive $4$--braid such that its closure $\widehat{\beta}$ is not an unlink, then
\[{-\s}(\widehat{\beta})%\geq \frac{\b(\beta_1)+1}{2}
>\frac{\b(\widehat{\beta})}{2}.\]
\end{thm}
By an observation~\cite[Reduction Lemma]{Feller_14_Sigofposbraids}, which we recall in the appendix for the reader's convenience, Theorem~\ref{thm:1/2for4braids} implies the following %linear
bound for all positive braids:

\begin{thm}\label{thm:1/8}Let $\beta$ be a positive braid such that its closure $\widehat{\beta}$ is not an unlink, then
\[{-\s}(\widehat{\beta})%\geq \frac{\b(\beta_1)+1}{2}
>\frac{\b(\widehat{\beta})}{8}.\]
\end{thm}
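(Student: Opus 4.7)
The plan is to reduce Theorem~\ref{thm:1/8} to Theorem~\ref{thm:1/2for4braids} by invoking the observation recalled in the appendix, namely \cite[Lemma~5]{Feller_14_Sigofposbraids}. The role of that lemma is purely as a \emph{promotion} principle: a linear bound $-\s(\widehat{\beta}) > c\cdot \b(\widehat{\beta})$ valid for non-trivial positive $4$-braid closures is to be upgraded into a linear bound $-\s(\widehat{\beta}) > (c/4)\cdot \b(\widehat{\beta})$ for arbitrary non-trivial positive braid closures. Feeding $c=\tfrac12$ from Theorem~\ref{thm:1/2for4braids} into this mechanism yields exactly the factor $1/8$ asserted in Theorem~\ref{thm:1/8}.

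Executing the plan, given a positive braid $\beta$ with $\widehat{\beta}$ not an unlink, I would first apply the appendix's lemma to extract a finite collection of positive $4$-sub-braids $\beta_{1},\dots,\beta_{m}$ of $\beta$ (morally built from consecutive triples of Artin generators appearing in $\beta$), together with the structural inequalities that the lemma guarantees: a signature super-additivity $-\s(\widehat{\beta}) \ge \sum_{i} -\s(\widehat{\beta_{i}})$ and a first-Betti-number comparison $\b(\widehat{\beta}) \le 4\sum_{i} \b(\widehat{\beta_{i}})$, together with the assurance that at least one $\widehat{\beta_{i}}$ is itself not an unlink.

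Next, I would apply Theorem~\ref{thm:1/2for4braids} term by term. For each summand whose closure is non-trivial, the strict bound $-\s(\widehat{\beta_{i}}) > \b(\widehat{\beta_{i}})/2$ holds; for the trivial summands both sides vanish, so the same inequality holds weakly. Summing over $i$, combining with the two structural inequalities, and using the guaranteed existence of at least one non-trivial summand to preserve strictness, I obtain $-\s(\widehat{\beta}) > \b(\widehat{\beta})/8$, which is Theorem~\ref{thm:1/8}.

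The main obstacle is entirely off-loaded to \cite[Lemma~5]{Feller_14_Sigofposbraids}: once that lemma is invoked, the proof here is a brief assembly. The only subtlety I would verify carefully against the appendix is that the ``not an unlink'' hypothesis on $\widehat{\beta}$ propagates to at least one of the sub-braid closures $\widehat{\beta_{i}}$, since this is precisely what converts the summed non-strict $4$-braid bounds into a strict bound on $-\s(\widehat{\beta})$. Without that propagation one would recover only the non-strict $-\s(\widehat{\beta}) \ge \b(\widehat{\beta})/8$.
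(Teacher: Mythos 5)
Your top-level strategy---reduce to Theorem~\ref{thm:1/2for4braids} via the observation recalled in the appendix---is indeed the paper's strategy, but the two ``structural inequalities'' you attribute to \cite[Lemma~5]{Feller_14_Sigofposbraids} are not what it provides, and the first of them is a genuine gap. No super-additivity $-\s(\widehat{\beta})\geq\sum_i-\s(\widehat{\beta_i})$ over a family of sub-braids is asserted or available without a correction term (restricting the symmetrized Seifert form to the sub-braid subspaces controls the negative index from below, but not the positive index of the ambient form). What the paper actually does is: for each residue $i\in\{1,2,3,4\}$ it smooths, for every generator index $k\equiv i\pmod 4$, all but one occurrence of $a_k$, producing a braid $\beta(i)$ whose closure is a \emph{connected sum} of closures of positive braids on at most $4$ strands. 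Signature is exactly additive over that connected sum by \eqref{eq:sigisadd}, and the passage from $\widehat{\beta}$ to $\widehat{\beta(i)}$ costs at most $1$ in signature \emph{per smoothed crossing} by \eqref{eq:sigundersaddlemove}, the number of smoothings being $\b(\widehat{\beta})-\b(\widehat{\beta(i)})$. The correct comparison is therefore the additive one, $-\s(\widehat{\beta})\geq-\s(\widehat{\beta(i)})-\bigl(\b(\widehat{\beta})-\b(\widehat{\beta(i)})\bigr)$, not the multiplicative bookkeeping you describe.

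Your Betti-number inequality also needs an argument you omit: for a fixed choice of which generators to discard, the retained sub-braids can carry arbitrarily little of $\b(\widehat{\beta})$ (take a word in which almost all letters are a single discarded generator). The paper uses \eqref{eq:b} and a pigeonhole over the four residue classes to find some $i$ with $\b(\widehat{\beta(i)})\geq\frac{3}{4}\b(\widehat{\beta})$, so that at most a quarter of the Betti number is destroyed. The final arithmetic is $-\s(\widehat{\beta})>\frac{1}{2}\cdot\frac{3}{4}\b(\widehat{\beta})-\frac{1}{4}\b(\widehat{\beta})=\frac{1}{8}\b(\widehat{\beta})$; it is a numerical coincidence at $c=\frac{1}{2}$ that this agrees with your ``$c/4$'' promotion, since the method actually yields the factor $\frac{3c-1}{4}$. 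Your concern about strictness is handled automatically: $\b(\widehat{\beta(i)})\geq\frac{3}{4}\b(\widehat{\beta})\geq\frac{3}{4}$ forces $\b(\widehat{\beta(i)})\geq 1$, hence at least one non-trivial connected summand to which Theorem~\ref{thm:1/2for4braids} applies strictly.
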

In other words, up to a factor of $4$,~\eqref{eq:1/2conj} holds. % since . %, improving a previous linear bound~\cite[Theorem~?]{Feller_14_Sigofposbraids}.
The main technical ingredient in the proof of Theorem~\ref{thm:1/2for4braids} is Gordon and Litherland's approach of using (non-oriented) checkerboard surfaces and the associated Goeritz form to calculate the signature~\cite{GordonLitherland_78_OnTheSigOfALink}.

Let us shortly put Theorems~\ref{thm:1/2for4braids} and~\ref{thm:1/8} in context.
Links that arise as closures of positive braids are a well-studied class of links containing important families %of links
such as (positive) torus links, algebraic links, and Lorenz links, while themselves being a subclass of positive links.
Rudolph established that closures of positive braids have strictly negative signature~\cite{Rudolph_83_PosBraidPosSig}.
For positive $4$--braids, previous results by Stoimenow~\cite[Theorem~4.2]{Stoimenow_08} and the author~\cite[Main Proposition]{Feller_14_Sigofposbraids},
provided linear bounds of the signature in terms of the first Betti number with factor $\frac{2}{11}$ and $\frac{5}{12}$, respectively.
For the more general class of positive links, Baader, Dehornoy, and Liechti %(also using checkerboard surfaces)
provide a linear bound for the signature in terms of the first Betti number with factor $\frac{1}{48}$~\cite[Theorem~2]{BaaderDehornoyLiechti_15_SigandConofPosKnots}.
The novelty of Theorem~\ref{thm:1/2for4braids} is that the factor $\frac{1}{2}$ of the linear bound is optimal; compare the discussion in Section~\ref{sec:examples}. The linear bound with  factor $\frac{1}{8}$ for all positive braids as provided in Theorem~\ref{thm:1/8} is a clear improvement over the best known previous bound; compare~\cite{Feller_14_Sigofposbraids} and~\cite[Theorem~2]{BaaderDehornoyLiechti_15_SigandConofPosKnots}.
A geometric consequence of the signature bounds are lower bounds for the topological slice genus $g_{4}^{top}(K)$ of a knot $K$\textemdash the minimal genus among all locally flat oriented surfaces in the unit $4$-ball $B^4$ with boundary $K\subset S^3=\partial B^4$.
Indeed, combining Theorem~\ref{thm:1/2for4braids} and Theorem~\ref{thm:1/8} with Kauffman and Taylor's result that
$|\s(K)|\leq 2g_{4}^{top}(K)$ for all knots $K$~\cite{KauffmanTaylor_76_SignatureOfLinks}
yields the following.
For a knot $K$, denote the three-genus\textemdash half the first Betti number of $K$\textemdash by $g(K)$.
\begin{corollary}\label{cor:topgenusbounds}
For a knot $K$ that is not the unknot and that is the closure of a %non-trivial
positive braid, one has
\[g(K)\geq g_{4}^{top}(K)>\frac{g(K)}{8}.\] Furthermore, if $K$ is the closure of a positive $4$--braid, then
\[g(K)\geq g_{4}^{top}(K)>\frac{g(K)}{2}.\]
\end{corollary}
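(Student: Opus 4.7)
The strategy is to combine the signature estimates in Theorems~\ref{thm:1/2for4braids} and~\ref{thm:1/8} with the Kauffman--Taylor inequality $|\s(K)|\leq 2g_{s}^{top}(K)$~\cite{KauffmanTaylor_76_SignatureOfLinks}, together with the elementary identity $\b(K)=2g(K)$ valid for any knot $K$ (a Seifert surface of minimal genus has a single boundary component, so its first Betti number equals $2g$).

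First, I would dispatch the upper bound $g_{s}^{top}(K)\leq g(K)$, which is classical: any Seifert surface $\Sigma\subset S^{3}$ of genus $g(K)$ may be pushed into the open four-ball $\mathring{B}^{4}$ (keeping $\partial\Sigma=K$ fixed) to yield a smooth, hence locally flat, properly embedded oriented surface in $B^{4}$ of the same genus bounding $K$.

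For the lower bounds, note that since $K$ is a knot and, by hypothesis, is not the unknot, $K$ is not an unlink, so Theorems~\ref{thm:1/2for4braids} and~\ref{thm:1/8} apply to the braid whose closure is $K$. Substituting $\b(K)=2g(K)$ into Theorem~\ref{thm:1/8} gives $-\s(K)>g(K)/4$, whence Kauffman--Taylor yields
\[
g_{s}^{top}(K)\;\geq\;\frac{|\s(K)|}{2}\;>\;\frac{g(K)}{8}.
\]
The sharper estimate for $4$-braids follows from the identical calculation, with Theorem~\ref{thm:1/2for4braids} in place of Theorem~\ref{thm:1/8}, producing $g_{s}^{top}(K)>g(K)/2$.

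There is essentially no technical obstacle here: the corollary is a purely formal consequence of results already in hand. The only points to verify are the identity $\b=2g$ for knots and the applicability of the signature theorems, both of which are immediate.
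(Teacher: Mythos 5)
Your proposal is correct and is exactly the argument the paper intends: the corollary is stated as an immediate consequence of Theorems~\ref{thm:1/2for4braids} and~\ref{thm:1/8} combined with Kauffman--Taylor's inequality $|\s(K)|\leq 2g_{s}^{top}(K)$, the identity $\b(K)=2g(K)$ for knots, and the standard push-in of a minimal Seifert surface for the upper bound. The arithmetic in both cases checks out.
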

\begin{Remark}
There are knots $K$ that are closures of positive braids for which
$g(K)$ is strictly larger than $g_{4}^{top}(K)$, which is surprising since in the smooth setting the smooth slice genus is equal to the three-genus
%(for quasi-positive knots)
by Kronheimer and Mrowka's resolution of the Thom conjecture~\cite[Corollary~1.3]{KronheimerMrowka_Gaugetheoryforemb}.
Indeed, Rudolph observed that the topological slice genus of the torus knot $T_{5,6}$ is strictly less than $10=g(T_{5,6})$.
In fact, there are infinite families of positive braid knots for which the topological slice genus can be linearly bounded away from the three-genus by a significant amount~\cite[Theorem~2]{Rudolph_84_SomeTopLocFlatSurf},~\cite{BaaderFellerLewarkLiechti_15}.
This justifies interest in a linear lower bound as provided in Corollary~\ref{cor:topgenusbounds}.\end{Remark}

We conclude the introduction by outlining the strategy of the proof of Theorem~\ref{thm:1/2for4braids}, as provided in Section~\ref{sec:1/2for4braids}. We will use Gordon and Litherland's approach to the signature via Goeritz forms to show the following. For every link $L$ that is the closure of a non-trivial positive $4$--braid, there exists a link $L'$ that is the closure of a positive $3$--braid with
\[\b(L')=\b(L)+1\et |\s(L)- \s(L')|\leq 1.\] This will allow us to reduce Theorem~\ref{thm:1/2for4braids} to the following proposition.
\begin{prop}\label{prop:1/2for3braids}
Let $\beta$ be a %non-trivial
positive $3$--braid with $\b(\widehat{\beta})\geq 2$,
then
\[{-\s}(\widehat{\beta})\geq \frac{\b(\widehat{\beta})}{2}+1.\]
\end{prop}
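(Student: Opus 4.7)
The plan is to induct on the exponent sum $c$ of $\beta$, exploiting the Gordon--Litherland Goeritz-matrix formalism highlighted in the introduction. Up to conjugation we may arrange $\beta$ in the form $\sigma_1^{a_1}\sigma_2^{b_1}\cdots\sigma_1^{a_s}\sigma_2^{b_s}$ with $a_i,b_i \geq 1$. If one generator is absent, $\widehat\beta$ is the split union of a $(2,k)$-torus link with an unknot; if one generator appears exactly once, conjugation by $\Delta=\sigma_1\sigma_2\sigma_1$ (which swaps $\sigma_1$ and $\sigma_2$) followed by Markov destabilization identifies $\widehat\beta$ with a $(2,k)$-torus link. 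In both cases the identity $\s(T(2,k)) = -(k-1)$ reduces the inequality to an elementary check. We may therefore assume that each generator appears at least twice, so the canonical Seifert surface is connected, $\b(\widehat\beta) = c-2$, and the desired inequality becomes ${-\s}(\widehat\beta) \geq c/2$; the hypothesis $\b(\widehat\beta) \geq 2$ now reads $c \geq 4$.

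For the base case $c = 4$, only three positive $3$-braids occur up to conjugation and the $\sigma_1 \leftrightarrow \sigma_2$ symmetry: $\sigma_1^3\sigma_2$ (closure: trefoil, by destabilization), $\sigma_1^2\sigma_2^2$ (closure: connected sum of two Hopf links), and $(\sigma_1\sigma_2)^2$ (closure: $T(3,2)$, again the trefoil). Each satisfies ${-\s} = 2 = c/2$ with equality. For the inductive step at $c \geq 5$, I split into two cases: either $\beta$ is conjugate to $(\sigma_1\sigma_2)^m$, in which case $\widehat\beta$ is the torus link $T(3,m)$ and the claim follows from the classical signature formula for $T(3,m)$; or some run $\sigma_i^a$ has $a \geq 2$, in which case $\beta'$ is formed by deleting two crossings from the chosen run. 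After possibly passing to one of the $(2,k)$-torus-link cases above, $\beta'$ satisfies the inductive hypothesis, giving ${-\s}(\widehat{\beta'}) \geq (c-2)/2$; the proof then reduces to establishing the signature-change estimate $\s(\widehat\beta) \leq \s(\widehat{\beta'}) - 1$.

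The main obstacle is precisely this signature estimate, since Murasugi's crossing-change inequality applied twice yields only $\s(\widehat\beta) \leq \s(\widehat{\beta'})$, leaving a gap of one unit of negative signature. Closing this gap is exactly where the Gordon--Litherland machinery enters. For a suitable checkerboard colouring of the standard closed-braid diagram $D$ of $\widehat\beta$, the reduced Goeritz matrix $G_D$ block-decomposes along the runs of $\beta$, with each run of length $a$ contributing a negative-definite tridiagonal block of size $a-1$. Deleting two crossings from a run of length $a \geq 2$ shrinks the corresponding block by two (or removes it entirely, when $a = 2$), decreasing the signature of $G_D$ by exactly $2$; combined with the controlled change in the Gordon--Litherland correction term $\mu(D)$ when two positive crossings are removed, this produces the extra unit of negative signature that Murasugi's inequality alone cannot deliver, and the induction closes.
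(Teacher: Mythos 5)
Your overall strategy is genuinely different from the paper's: the paper does not induct on word length at all, but instead uses the Gambaudo--Ghys formula $\sigma(\widehat{\Delta^4\beta})=\sigma(\widehat{\beta})-8$ to reduce to braids $\Delta^n\alpha$ with $n\le 3$, and then performs explicit saddle moves separating the $a_2$-blocks to reach a connected sum of braid-index-two torus links (where $-\sigma$ equals the first Betti number), losing only one unit of signature per saddle move. The Goeritz machinery appears in the paper only in the reduction from $4$-braids to $3$-braids, not in the proof of this proposition. Your route would be attractive if it worked, but it has a genuine gap exactly at the point you identify as the main obstacle: the estimate $\sigma(\widehat{\beta})\le\sigma(\widehat{\beta'})-1$ when two crossings are added to a run is never actually established, and the Goeritz-matrix picture you invoke to close it is structurally wrong. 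For the standard checkerboard surface of a closed $3$-braid diagram, the Goeritz matrix is a single cyclic tridiagonal matrix indexed by the regions between (say) strands $1$ and $2$: the bigons inside the $a_1$-runs do give diagonal entries $-2$, but the ``long'' regions separating consecutive runs have diagonal entries $-2+(\text{number of }a_2\text{'s in the gap})$, which are typically non-negative, and they link all the would-be blocks together. (This is why the matrix is not negative definite and why $-\sigma(\widehat{\beta})$ can be much less than $\b(\widehat{\beta})$, e.g.\ for $T_{3,6}$.) Consequently the matrix does not block-decompose along runs, a run of the \emph{other} generator does not correspond to a block at all (shortening it only changes one diagonal entry, not the size of the matrix), and the assertion that deleting two crossings decreases $\sigma(G_D)$ by \emph{exactly} $2$ does not follow from anything you have said: Cauchy interlacing only bounds the change by $2$ in absolute value, and the sign of the change is precisely the content of the proposition. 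Indeed, if the run you shorten is a run of $a_2$ (in the colouring above), the size of $G_D$ is unchanged while $\mu$ drops by $2$, and the naive bookkeeping then gives a change of signature that can a priori be $0$, which is not enough to close the induction.

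There is also a smaller issue at the bottom of the induction: when the shortened word $\beta'$ has a generator appearing at most once, you fall into the $(2,k)$-torus-link cases, and for small exponent sum (e.g.\ $\beta=a_1^2a_2^3$, deleting from the run of length $3$) the resulting bound $-\sigma(\widehat{\beta})\ge-\sigma(\widehat{\beta'})+1$ is strictly weaker than the target $\lceil c/2\rceil$; one must choose \emph{which} run to shorten with some care, and this is not addressed. To repair the argument you would either need to prove the strict monotonicity statement for a well-chosen run (which seems to require essentially the same global analysis the paper performs), or adopt the paper's device: quotient out powers of $\Delta^4$ using Gambaudo--Ghys, and handle the finitely many residual cases by saddle moves to connected sums of $(2,k)$-torus links.
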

Proposition~\ref{prop:1/2for3braids} improves Stoimenow's result that
${-\s}(\widehat{\beta})
>\frac{\b(\widehat{\beta})}{2}$ {for positive non-trivial $3$--braids~\cite[Theorem~4.1]{Stoimenow_08}}.
%In fact, Proposition~\ref{prop:1/2for3braids} is only an improvement over \eqref{eq:1/2for3braids} if $\b(\beta)$ is odd.
We provide a proof for Proposition~\ref{prop:1/2for3braids} which is independent of Stoimenow's techniques; see Section~\ref{sec:1/2+1for3braids}. %since we use this improvement of~\eqref{eq:1/2for3braids} in the proof of Theorem~\ref{thm:1/2for4braids}. %Note also that, therefore, this article is fully independent of Stoimenow's work.
 Optimality of Theorem~\ref{thm:1/2for4braids} and Proposition~\ref{prop:1/2for3braids} is discussed in Section~\ref{sec:examples}.

 {\bf{Acknowledgements}:} I thank Josh Greene for sharing his work on checkerboard surfaces%~\cite{Greene_15_Alternatinglinksanddefinitesurfaces}
 , which led me to consider to use them to prove Theorem~\ref{thm:1/2for4braids}. Thanks also to Sebastian Baader for helpful remarks. I thank both of them for the fun hours we spent calculating signatures. %Finally thanks to the referees for helpful and detailed comments.

\section{Setup: Signatures of links via Goeritz forms and positive braids}\label{sec:notions}
We set up notions and recall facts about braids and the signature of links.
\subsection{Signature of links and Goeritz forms}
For a link $L$\textemdash an oriented smooth embedding of a non-empty finite union of circles in $S^3$\textemdash the \emph{signature}, denoted by $\s(L)$, is defined to be the signature of the symmetrized Seifert form on $H_1(F)$, where $F$ is any %Seifert surface for $L$\textemdash an
compact and oriented surface in $S^3$ with oriented boundary $L$; compare Trotter and Murasugi~\cite{Trotter_62_HomologywithApptoKnotTheory,Murasugi_OnACertainNumericalInvariant}. In particular,
one has that $-\b\leq\s\leq\b$ holds for all links. Unifying Trotter's approach to the signature and work of Goeritz~\cite{Goeritz_33},
Gordon and Litherland~\cite{GordonLitherland_78_OnTheSigOfALink} %(building on Goeritz's work)
introduced the following procedure to calculate the signature.
For any link diagram $D_L$ of $L$---the image of a generic project of the link $L$ to a standard $2$--sphere $\R^2\cup\{\infty\}$ in $S^3$ together with crossing information---%(for convenience, we assume the diagram to be non-split)
one has
\begin{equation*}
\s(L)=\s(S_L)-\mu(S_L),
\end{equation*}
where $S_L$ is a non-oriented surface with boundary $L$ given as one of the two checkerboard surfaces\footnote{$S_L$ is contained in $D_L\subset S^3$ away from neighborhoods of crossings. In a neighborhood of a crossing, $S_L$ is given by a small `half-twisted' band. %Rather than providing a formal definition of $S_L$,
We refer to Figure~\ref{fig:Sfor4braids} for an illustrative example and %, if needed,
to Gordon and Litherland's original work~\cite{GordonLitherland_78_OnTheSigOfALink} for more details.} of $D_L$ and $\s(S_L)$ and $\mu(S_L)$ are defined as follows.
To every crossing $c$ of $D_L$ one associates a type (I or II) and a sign $\eta(c)$ (1 or -1) by the rule specified in Figure~\ref{fig:typeandsign}.
\begin{figure}[h]
\centering
\def\svgscale{1.5}
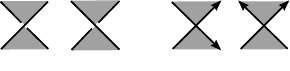
\caption{Left: Sign associated to a crossing. Right: Type associated to a crossing. The type only depends on whether or not the surface $S_L$ (gray) can locally be given an orientation that induces the orientation of the link. In particular, the type is independent of the crossing information.}
\label{fig:typeandsign}
\end{figure}
%Note the following (linguistically confusing) fact: The type of a crossing does not depend on the crossing type.
Then one defines \begin{equation}\label{eq:def:mu}\mu(S_L)=\sum_{c \in \text{crossings of }L\text{ of type II}}\eta(c).\end{equation}
To define $\s(S_L)$, pick a basis $[\delta_1],\cdots,[\delta_k]$ of $H_1(S_L)$ represented by simple closed curves $\delta_i\subset S_L$ and let the matrix %\emph{Goeritz matrix}\footnote{The attentive reader might notice that only for a special choice of the basis this yields the matrix considered by Goeritz. However, these matrices still represent the same bilinear form.}
$G_L=\{g_{ij}\}$ be %the matrix
given by
%\begin{equation}\label{eq:def:gij}
$g_{ij}=lk(\delta_i,\delta_j^{\pm})$. %\end{equation}
 Here $\delta_j^{\pm}$ denotes the link in
$S^3\backslash S_L$ obtained from $\delta_j\subset S_L$ by a small push-off in both normal directions of $S_L$ and $lk$ denotes the linking number in $S^3$. Then one sets
$\s(S_L)=\s(G_L)$,
where $\s(G_L)$ denotes the signature of $G_L$\textemdash the number of positive eigenvalues minus the number of negative eigenvalues counted with multiplicities. The bilinear form defined by $G_L$ is called the \emph{Goeritz form}. This fits into the setting of the more general \emph{Gordon-Litherland pairing}, where one uses any (in general non-orientable) surface $S_L$ with (unoriented) boundary $L$ rather than a checkerboard surface and $-\mu(S_L)$ is replaced by half the Euler number of $S_L$; see~\cite[Corollary~5'']{GordonLitherland_78_OnTheSigOfALink}.
A warning concerning sign conventions is in order: the above definition of $\s(L)$ has opposite sign of that given in~\cite{Rudolph_83_PosBraidPosSig,Stoimenow_08,BaaderDehornoyLiechti_15_SigandConofPosKnots}. The present convention agrees with the convention in~\cite{Trotter_62_HomologywithApptoKnotTheory,Murasugi_OnACertainNumericalInvariant,GordonLitherland_78_OnTheSigOfALink} and appears to be the standard one. For example, \begin{equation}\label{eq:sigbraidindex2}
\s(T_{2,{n+1}})=-n\quad\text{(rather than $n$)  for all positive integers $n$.}
                                                 \end{equation}
%=\s\left(\widehat{a^{n+1}}\right)

The following properties of the signature of a link follow rather directly from both the original definition and Gordon and Litherland's approach.
If a link $L'$ can be obtained from a link $L$ by one saddle move, then
\begin{equation}\label{eq:sigundersaddlemove}|\s(L)-\s(L')|\leq 1\quad\text{\cite[Lemma~7.1]{Murasugi_OnACertainNumericalInvariant}}.\end{equation}
Here a \emph{saddle move} is defined as changing the link in a $3$--ball as described on the left-hand side in Figure~\ref{fig:saddlemove}.
\begin{figure}[h]
\centering
\def\svgscale{1.9}
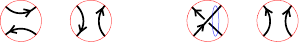
\caption{Left: A saddle move. Right: A smoothing of a crossing, which is obtained by applying a saddle move to the blue sphere}
\label{fig:saddlemove}
\end{figure}
If $L'$ can be obtained from $L$ by \emph{smoothing of a crossing}, defined on the right-hand side in Figure~\ref{fig:saddlemove},
then~\eqref{eq:sigundersaddlemove} also holds since a smoothing of a crossing corresponds to a saddle move on the link.
The signature is additive under split union and connected sum: {for all links $L$ and $L'$}
\begin{equation}\label{eq:sigisadd}\s(L\sqcup L')=\s(L\sharp L')=\s(L)+\s(L')%\quad\text{for all links $L$ and $L'$}
\quad\text{\cite[Lemma~7.2 and Lemma~7.3]{Murasugi_OnACertainNumericalInvariant}}.
\end{equation}
{Note that the connected sum $L\sharp L'$ is only well-defined if one specifies which components of the two links $L$ and $L'$ are connected via the connect
sum operation; however,~\eqref{eq:sigisadd} holds for all such specifications.}
\subsection{Positive braids}\label{sec:posbr}
Let $B_b$ be Artin's \emph{braid group} on $b$ strands~\cite{Artin_TheorieDerZoepfe}.
\[B_b=\left<a_1,\cdots,a_{b-1}\;|\; a_{i} a_{j}= a_{j} a_{i} \text{ for }|i-j|\geq 2,\;  a_{i} a_{j} a_{i}= a_{j} a_{i} a_{j} \text{ for } |i-j|=1\right>.\]
A braid $\beta$ in $B_b$ corresponds to a (geometric) braid, represented via braid diagrams, and yields a link $\widehat{\beta}$ via the closure operation. Generators $a_i$ (respectively $a_i^{-1}$) correspond to braids given by the braid diagram where the $i$th and $(i+1)$-st strands cross once positively (respectively negatively). This yields that a \emph{braid word}---a word in the generators of $B_b$---defines a diagram for the braid it encodes.
We illustrate this in Figure~\ref{fig:braidandclosure} for one example to set conventions and refer to~\cite{Birman_74_BraidsAMSStudies} for a detailed account. % and braids in general.
\begin{figure}[h]
\centering
\def\svgscale{0.7}
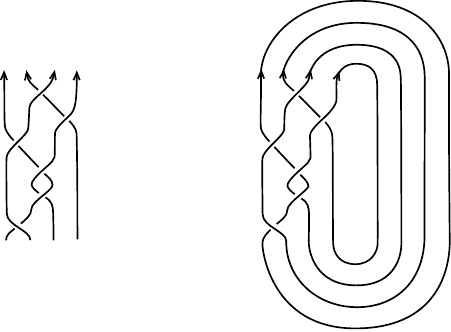
\caption{A braid diagram (left) coming from the braid word $a_1^{-1}a_2a_2a_1a_3a_2$ and a link diagram (right) for the corresponding braid closure.
}
% $\widehat{\beta}$ of $\beta$.}
\label{fig:braidandclosure}
\end{figure}

A \emph{positive braid} on $b$ strands is an element $\beta$ in $B_b$ that can be written as positive braid word $ a_{s_{1}} a_{s_{2}}\cdots a_{s_{\l(\beta)}}$ with $s_{i}\in\{1,\cdots,b-1\}$, where
$\l(\beta)$ is called the \emph{length} or \emph{writhe} of $\beta$. Note that $\l(\beta)$ an invariant of positive braids $\beta$ since it is independent of the choice of a positive braid word for $\beta$.
The first Betti number of the closure of positive braids is understood as a consequence of Bennequin's inequality~\cite[Theorem~3]{Bennequin_Entrelacements}, which implies the following formula:
\begin{equation}\label{eq:b}
\b(\widehat{\beta})=\l({\beta})-b+c\;\;\text{for every positive braid }\beta,
\end{equation}
where $b$ is the number of strands of $\beta$ and $c$
equals $1$ plus the number of generators $ a_{i}$ that are not used in a positive braid word for $\beta$.
For example for all positive integers $n$ and $m$, the torus link $T_{n,m}$ is a positive braid link with first Betti number
$(n-1)(m-1)$ since the closure of the braid $(a_1\cdots a_{n-1})^m\in B_n$ is $T_{n,m}$.

\section{Proof of Theorem~\ref{thm:1/2for4braids}}\label{sec:1/2for4braids}
In this section we prove that, if a link $L$ with $\b(L)\geq1$ is the closure of a positive $4$--braid, then
 \[{-\s}(L)\geq \frac{\b(L)}{2}+\frac{1}{2} >\frac{\b(L)}{2}.\] Besides the notions introduced in Section~\ref{sec:notions}, the proof uses Proposition~\ref{prop:1/2for3braids}, which is proved in Section~\ref{sec:1/2+1for3braids}.

\begin{proof}[Proof of Theorem~\ref{thm:1/2for4braids}]
For the entire proof, let $L$ be the closure of a non-trivial positive $4$--braid given by a positive $4$--braid word $\beta$. Without loss of generality all generators $a_1$, $a_2$, and $a_3$ appear in $\beta$ at least twice; in particular, $b_1(L)\geq 3$ by~\eqref{eq:b}.
Indeed, otherwise $L$ is a connected sum or split union of links that are closures of positive braids on $3$ or less strands, for which the statement follows from Proposition~\ref{prop:1/2for3braids}, \eqref{eq:sigbraidindex2}, and the fact that signature is additive on connect sums and split unions.

We define a $4$--braid word $\alpha$ by replacing all $a_3$ in $\beta$ by $a_1$. For example,
if $\beta=a_1a_3a_2a_1a_3a_2a_2a_1a_3a_2$, then $\alpha=a_1a_1a_2a_1a_1a_2a_2a_1a_1a_2$.

From the given braid word $\beta$ we get a braid diagram and a corresponding link diagram $D_\beta$ (compare Section~\ref{sec:posbr}) representing $L=\widehat{\beta}$. Similarly, the braid word $\alpha$ yields a link diagram $D_\alpha$ for $\widehat{\alpha}$. Next we checkerboard color the two diagrams $D_\beta$ and $D_\alpha$ (we choose the convention that the unbounded region of a diagram is white) and we denote the black checkerboard surfaces by $S_\beta$ and $S_\alpha$, respectively. See Figure~\ref{fig:Sfor4braids}, where this is illustrated for an example.
\begin{figure}[h]
\centering
\def\svgscale{1.6}
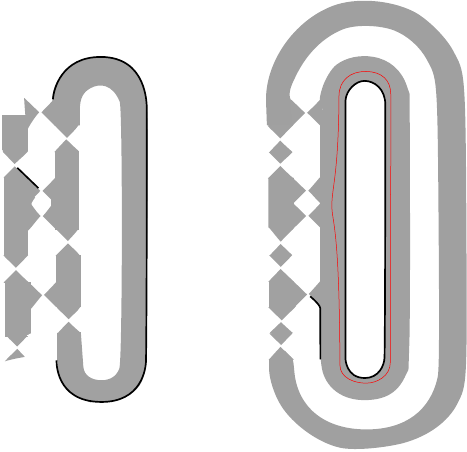
\caption{The checkerboard surfaces $S_\beta$ and $S_\alpha$ (gray) associated with the braid words $\beta=a_1a_3a_2a_1a_3a_2a_2a_1a_3a_2$ (left) and $\alpha=a_1a_1a_2a_1a_1a_2a_2a_1a_1a_2$ (right). The curves $\gamma_i$ and $\delta_i$ (red) constitute bases of $H_1(S_\beta)$ and $H_1(S_\alpha)$, respectively.}
\label{fig:Sfor4braids}
\end{figure}
We note that the diagrams $D_\beta$ and $D_\alpha$ and the checkerboard surfaces $S_\beta$ and $S_\alpha$ depend on the braid words $\beta$ and $\alpha$, respectively, (rather than just depending on the braids that $\beta$ and $\alpha$ represent in $B_4$).

Let $k$ denote the number of $a_2$ in the braid word $\beta$. The number of $a_2$ in the braid word $\alpha$ is also $k$. The checkerboard surfaces $S_\beta$ and $S_\alpha$ have first Betti number $k+1$.
%, which we denote by $k$. %, since clearly the homotopy type of $S_\beta$ (respectively $S_\alpha$) does only depend on the number of $a_2$ in the braid $\beta$ (respectively $\alpha$).
This is best seen by observing that the Betti number of $S_\beta$ and $S_\alpha$ are equal to the number of bounded white regions in $D_\beta$ and $D_\alpha$, respectively, of which there are $k+1$. We describe this in a bit more detail and set up a one-to-one correspondence between the bounded white regions of $D_\beta$ and $D_\alpha$, which will be  useful below.
Every $a_2$ in $\beta$ corresponds to a crossing in $D_\beta$, which is touched by two white regions (one from above and one from below). We label the generators $a_2$ in $\beta$ by $1$, $2$, $\cdots$, $k$ in order of appearance from the left and assign the same label to the unique white region touching the corresponding crossings from above. Only one bounded white region remains. We label this bounded white region by $k+1$. % and the unbounded region remains unlabeled.
The same labeling procedure applied to $\alpha$ yields a labeling for the white bounded regions of $D_\alpha$ and this sets up a one-to-one correspondence between the bounded white regions of $D_\beta$ and $D_\alpha$. See Figure~\ref{fig:Sfor4braids}, where this labeling is illustrated.
% Let $k$ be the first Betti number of $S_\beta$ (and thus also $S_\alpha)$.

Following Goeritz and Gordon-Litherland~\cite{Goeritz_33,GordonLitherland_78_OnTheSigOfALink},
we choose bases $([\gamma_1],\cdots,[\gamma_{k}])$ and $([\delta_1],\cdots,[\delta_{k}])$ for $H_1(S_\beta)$ and $H_1(S_\alpha)$, respectively, as follows. %Label the bounded white region to the right in $D_\beta$ by $1$, the unbounded white region by $2$, and the other white regions (except the bounded region to the left) by $3,\cdots,k$.
The boundary of a white region in $D_\beta$ (respectively $D_\alpha$) labeled $i$ defines a curve $\gamma_i$ (respectively $\delta_i$) in $S_\beta$ (respectively $S_\alpha$). We orient the $\gamma_i$ and $\delta_i$ counterclockwise and let $[\gamma_i]$ and $[\delta_i]$ denote the corresponding homology classes in $H_1(S_\beta)$ and $H_1(S_\alpha)$, respectively.

%; see Figure~\ref{fig:Sfor4braids}.

Let $G_\beta$ and $G_\alpha$ denote the Goeritz matrices of $S_\beta$ and $S_\alpha$ with respect to the bases $([\gamma_1],\cdots,[\gamma_{k}])$ and $([\delta_1],\cdots,[\delta_k])$, respectively.
%When considering $S_\alpha$, $\delta_1$ does not link with any of the other $\delta_i$ for $i\geq2$ and also has linking number 0 with $\delta_1^{\pm}$; thus,
%the first row and column of $G_\alpha$ consists of only zeros.
The above one-to-one correspondence between the white bounded regions of $D_\beta$ and $D_\alpha$ is set up such that for all $i,j\leq k$,
\[lk(\gamma_i,\gamma_j^{\pm})=
lk(\delta_i,\delta_j^{\pm}).\] %This follows since crossings touching the white region labeled $i$ coming from $a_1$ contribute the same to the linking number of
In other words, all entries of $G_\alpha$ not in the last row or column coincide with the corresponding entries
of $G_\beta$. Since the signatures of two real-valued symmetric $(k+1)\times(k+1)$ matrices that are identical except in the last column and row differ by at most one, we have %in the last row and column  This yields that the signatures of $G_\alpha$ and $G_\beta$ differ by at most one:
\begin{equation}\label{eq:sigMbMa}
{-\s}(G_\beta)\geq{-\s}(G_\alpha)-1.
\end{equation}
Furthermore, $\mu(S_\beta)=\mu(S_\alpha)$ since $\mu(S_\beta)$ (respectively $\mu(S_\alpha)$) is equal to the number of $a_1$ and $a_3$ in $\beta$ (respectively $\alpha$) by~\eqref{eq:def:mu}.

We note that $\widehat{\alpha}$ is the split union of the unknot and the link $L'$ obtained as the closure of the $3$--braid given by interpreting $\alpha$ as a $3$--braid word. Therefore, $\s(L')=\s(\widehat{\alpha})$ by~\eqref{eq:sigisadd}. Note also that $\b(L')=\b(L)+1$ by~\eqref{eq:b}.

With all of the above and Gordon and Litherland's
\[\s(L)=\s(G_\beta)-\mu(S_\beta)\et \s(\widehat{\alpha})=\s(G_\alpha)-\mu(S_\alpha),\]
we calculate
\begin{align*}
{-\s}(L)&=-\s(G_\beta)+\mu(S_\beta)\\
&\geq -\s(G_\alpha)-1+\mu(S_\beta)\\
&=-\s(G_\alpha)-1+\mu(S_\alpha)\\
&={-\s}(\widehat{\alpha})-1\\
&={-\s}(L')-1\\
&\geq\frac{\b(L')}{2}+1-1\\
&=\frac{\b(L)+1}{2},
\end{align*}
where we used \eqref{eq:sigMbMa}, $\mu(S_\beta)=\mu(S_\alpha)$, Proposition~\ref{prop:1/2for3braids}, and \eqref{eq:b} in
the second, third, sixed, and last line, respectively.
This concludes the proof of
\[{-\s}(L)\geq\frac{\b(L)+1}{2}>\frac{\b(L)}{2},\]
for all closures $L$ of non-trivial positive $4$--braids.
\end{proof}

\section{Proof of Proposition~\ref{prop:1/2for3braids}}\label{sec:1/2+1for3braids}
In this section we prove that, for all positive $3$--braids $\beta$ with $\b(\widehat{\beta})\geq2$, we have
\begin{equation}\label{eq:1/2+1for3braids}{-\s}(\widehat{\beta})\geq
\frac{\b(\widehat{\beta})}{2}+1.\end{equation}
\begin{proof}[Proof of Proposition~\ref{prop:1/2for3braids}]
 Let $\beta$ be a %non-trivial
 positive $3$--braid with $\b(\widehat{\beta})\geq 2$. Denote by $\Delta$ the positive half-twist on $3$-strands $a_1a_2a_1=a_2a_1a_2$.
Let $n_\beta$ be the maximum of all non-negative integer $n'$ such that $\beta=\Delta^{n'}\alpha$ for a positive $3$--braid $\alpha$. Note that $n_\beta$ exists since the $n'$ are less or equal than a third of the length $l(\beta)$.
Since we are interested in the signature and the first Betti number of the closure of $\beta$, there is no loss of generality by assuming that $n_\beta$ is maximal among all such $n'$ for $\beta'=\Delta^{n'}\alpha'$, where $\beta'$ is any positive braid conjugate to $\beta$. This maximality of $n_\beta$ is assumed in the entire proof.
 For example, the braid $\beta=a_1a_2a_2a_2a_2$, which has $n_\beta=0$, is not considered
 since it is conjugate to $a_2a_1a_2a_2a_2=\Delta a_2a_2$. Furthermore, we assume in the entire proof that $\widehat{\beta}$ is not the closure of a $2$--braid since for such closures one has \[{-\s}(\widehat{\beta})\overset{\text{\eqref{eq:sigbraidindex2}}}{=}\l(\beta)-1\overset{\text{\eqref{eq:b}}}{=}\b(\widehat{\beta})\geq \frac{\b(\widehat{\beta})}{2}+1.\]
For example for all positive integers $l$, the braid $\beta=\Delta a_1^l$ is not considered since its closure is the braid index two torus knot $T_{2,l+2}$.

 If ${-\s}(\widehat{\beta})\geq\frac{\b(\widehat{\beta})}{2}$ holds for a positive $3$--braid $\beta$,
 then one also has
 \begin{align*}{-\s}\left(\widehat{(\Delta^4)^k\beta}\right)
 %&\geq \frac{\b\left(\widehat{(\Delta^4)^k\beta}\right)}{2}+2k\\
 &\geq\frac{\b\left(\widehat{(\Delta^4)^k\beta}\right)}{2}+1,\end{align*}
 for all positive integers $k$. This can be seen as follows. %follows since
 Gambaudo and Ghys established that
 \begin{equation}\label{eq:sigunderdoubletwist}{-\s}\left(\widehat{\left(\Delta^4\right)^k\beta}\right)
 ={-\s}(\widehat{\beta})+8k\quad\text{\cite[Lemma~4.1]{GambaudoGhys_BraidsSignatures}},\end{equation}
 for all $3$--braids $\beta$ and all integers $k$. Thus, ${-\s}(\widehat{\beta})\geq\frac{\b(\widehat{\beta})}{2}$ yields
 \begin{align*}
 {-\s}\left(\widehat{(\Delta^4)^k\beta}\right)&\overset{\text{\eqref{eq:sigunderdoubletwist}}}{=}{-\s}(\widehat{\beta})+8k\\
 &\geq \frac{\b(\widehat{\beta})}{2}+8k\\
 &\overset{\text{\eqref{eq:b}}}{\geq}\frac{\b\left(\widehat{(\Delta^4)^k\beta}\right)}{2}+2k\\
 &\geq\frac{\b\left(\widehat{(\Delta^4)^k\beta}\right)}{2}+1.
 \end{align*}
 %In fact, \eqref{eq:sigunderdoubletwist} is implicitly proven in~\cite{GLM}.
 Therefore, it suffices to establish~\eqref{eq:1/2+1for3braids}
 for positive $3$--braids $\beta$ with $\b(\widehat{\beta})\geq 2$ and $n_\beta=0,1,2$, or $3$.

 Let us first consider the case $n_\beta=0$.
 After possibly a conjugation (given by cyclic permutation of positive braid words), we have that $\beta$ is given by a positive braid word starting with $a_1$ and ending with $a_2$, i.e.
\begin{equation*}\label{eq:alphaforn=0}\beta=a_1^{l_0}a_2^{l_1}a_1^{l_2}\cdots a_1^{l_{c-1}}a_2^{l_{c}}\end{equation*}
for some odd integer $c\geq 1$ and positive integers $l_i$. Furthermore, we have that for all $i\in\{0,\cdots,c\}$ the $l_i$ are integers larger than or equal to $2$ since otherwise (up to cyclic permutation) $\beta$ will contain $a_1a_2a_1$ or $a_2a_1a_2$ as a subword which is impossible by the definition of $n_\beta$.
Note that $\b(\widehat{\beta})\geq 2c$ by~\eqref{eq:b}.
By applying $\frac{c-1}{2}$ saddle moves on $\widehat{\beta}$, %that cut off all but one of the $a_2^{\alpha_i}$ blocks,
we get a link $L$ that is a connected sum of $\frac{c+3}{2}$ torus links of braid index two; in fact, a connect sum of the torus link $T_{2,l_{even}}$, where $l_{even}=\sum_{i=0}^{\frac{c-1}{2}} l_{2i}$, and the torus links $T_{2,l_j}$ for odd $j$. This is possible by making saddle moves that `separate' all but one of the $a_2^{l_i}$-blocks as illustrated in Figure~\ref{fig:saddlemoveonalpha}.
\begin{figure}[h]
\centering
\def\svgscale{1.6}
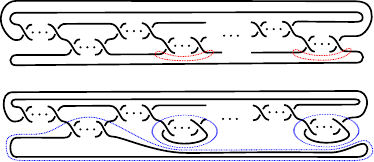
\caption{
Top: The link $\widehat{\alpha}$ with $\frac{c-1}{2}$ spheres (red) that indicate the saddle moves which yield $L$.
Bottom: The link $L$ with $\frac{c+1}{2}$ spheres (blue) that indicate how $L$ is separated into $\frac{c+3}{2}$ summands.}
\label{fig:saddlemoveonalpha}
\end{figure} Thus, we have
\[{-\s}(L)\overset{\text{\eqref{eq:sigisadd},\eqref{eq:sigbraidindex2}}}{=}\b(L)=\b(T_{2,l_{even}})+\sum_{i=0}^{\frac{c-1}{2}}\b(T_{2,l_{2i+1}})=\b(\widehat{\beta})-\frac{c-1}{2}.\]
And, therefore, we get
\[{-\s}(\widehat{\beta})\overset{\eqref{eq:sigundersaddlemove}}{\geq} {-\s}(L)-\frac{c-1}{2}=\b(\widehat{\beta})-c+1\geq\b(\widehat{\beta})-\frac{\b(\widehat{\beta})}{2}+1.\]

If $n_\beta=1$, we can assume (after possibly a conjugation) that $\beta=\Delta\alpha$ with
\[\alpha=a_1^{l_0}a_2^{l_1}a_1^{l_2}\cdots a_2^{l_{c-1}}a_1^{l_c}\]
for some even integer $c\geq2$, where %$\alpha_0$ is a positive integer and
 for all $i\in\{0,\cdots,c\}$ the $l_i$ are integers larger than or equal to $2$, by a similar argument as in the case of $n_\beta=0$. Indeed, for the first part of the statement, if a positive braid word for $\alpha$ starts with a power of $a_1$ ($a_2$) and ends with a power of $a_2$ ($a_1$), then a cyclic permutation and the braid relation $a_2^k\Delta=\Delta a_1^k$ (the braid relation $\Delta a_2^k=a_1^k\Delta$ and a cyclic permutation) allow to find a conjugate of $\beta$ for which $\alpha$ starts and ends with powers of $a_1$. And $c\geq2$ can be assumed since the case $c=0$, that is $\beta=\Delta a_1^l$ (which has closure $T_{2,2+l}$), was dealt with already.
 For the second part of the statement, we observe that $l_i=1$ for at least one $i$ allows to find a positive braid word for a conjugate of $\beta$ that starts with $\Delta^2$, which is impossible by the definition of $n_\beta$.
%for some even integer $c\geq0$ and positive integers $\alpha_i$ with $\alpha_i\geq 2$ if $i\geq 1$.
%Note that $\Delta a_1^l$ can not occur since its closure is $T_{2,2}$, which is the closure of braid on 2 strands.
Note that $\b(\widehat{\beta})=\b(\widehat{\alpha})+3\geq 2c+3$.
By a similar argument as in the case of $n_\beta=0$, we get a link $L$ that is a connected sum of $\frac{c}{2}+1$ torus links of braid index two by $\frac{c}{2}$ saddle moves on $\widehat{\beta}$ that separate all of the $a_2^{l_i}$ blocks in $\alpha$ and we have
\[{-\s}(L)=\b(L)=\b(\widehat{\beta})-\frac{c}{2}.\]
And, therefore, we get
\[{-\s}(\widehat{\beta})\overset{\eqref{eq:sigundersaddlemove}}{\geq} {-\s(L)}-\frac{c}{2}=\b(\widehat{\beta})-c\geq\b(\widehat{\beta})-\frac{\b(\widehat{\beta})-3}{2}\geq\frac{\b(\widehat{\beta})}{2}+\frac{3}{2}.\]

Similar arguments work for $n_\beta=2$ or $3$. A small difference occurs: the link $L$ will be a connected sum of braid index two torus knots and $\widehat{\Delta^2a_1^{l}}$ for some positive integer $l$ (instead of just braid index two torus knots); however, ${-\s}(\widehat{\Delta^2a_1^{l}})=\b(\widehat{\Delta^2a_1^{l}})$ (see for example~\cite{Baader_14_posbraidsofmaxsig}) and so the argument remains the same.
\begin{comment}
For $n=2$ or $3$, we write $\beta=\Delta^2\alpha$. We may assume that $\alpha$ is not a power of a single generator (otherwise ${-\s}(\widehat{\beta})=\b(\widehat{\beta})$). After possibly a conjugation, $\alpha$ is as in~\eqref{eq:alphaforn=0}
for some odd integer $c\geq 1$, where $\alpha_0$ and $\alpha_1$ are positive integers and for all $i\in\{1,\cdots,c\}$ the $\alpha_i$ are integers greater than or equal to $2$. In particular, we have $\b(\widehat{\beta})\geq2c+4$.
By $\frac{c+1}{2}$ saddle moves on $\widehat{\beta}$ that cut off all of the $a_2^{\alpha_i}$ blocks in $\alpha$, we get a link $L$, which is a connected sum of links of braid index two and $\widehat{\Delta^2(a_1^k)}$ for some $k\geq0$. Thus,
\[{-\s(L)}=\b(L)=\b(\widehat{\beta})-\frac{c+1}{2}\]
and
\[{-\s}(\widehat{\beta})\geq %\s(L)-\frac{c+1}{2}=
\b(\widehat{\beta})-(c+1)\overset{\b(\widehat{\beta})\geq2c+4}{\geq}
\b(\widehat{\beta})-\frac{\b(\widehat{\beta})-4}{2}-1\geq\frac{\b(\widehat{\beta})}{2}+1.\]
\end{comment}

\end{proof}
\section[Sharpness of the signature bounds]{Sharpness of the signature bounds: examples of positive $3$--braids and $4$--braids of small signature}\label{sec:examples}
In this section, we provide examples that show that the linear bounds provided in Theorem~\ref{thm:1/2for4braids} and Proposition~\ref{prop:1/2for3braids} are essentially optimal.

For every positive integer $n$, we study the following families of braids.
The  positive $3$--braids
\[\alpha_n=(a_1^2a_2^2)^{2n+1}\et{\alpha}_n'=a_2(a_1^2a_2^2)^{2n+1}\]
and the positive $4$--braids
\[\beta_n=(a_1a_3a_2^2)^{2n+1}\et{\beta}_n'=a_2(a_1a_3a_2^2)^{2n+1}.\]
We remark that the closure of ${\beta}_n'$ is a knot for all $n$.
\begin{prop}\label{prop:examples}
For all positive integers $n$, the signature of the closures of $\alpha_n$, ${\alpha}_n'$, $\beta$, and ${\beta}_n'$ are equal to the bounds provided in Theorem~\ref{thm:1/2for4braids} and Proposition~\ref{prop:1/2for3braids}, respectively:
\begin{align*}
{-\s}\left(\widehat{\alpha_n}\right)&=4n+2=\frac{8n+2}{2}+1=\frac{\b\left(\widehat{\alpha_n}\right)}{2}+1,\\
{-\s}\left(\widehat{{\alpha}_n'}\right)&=4n+3=\left\lceil\frac{8n+3}{2}+1\right\rceil
=\left\lceil\frac{\b\left(\widehat{{\alpha}_n'}\right)}{2}+1\right\rceil,\\
{-\s}\left(\widehat{{\beta}_n}\right)&=4n+1=\frac{8n+1}{2}+\frac{1}{2}=\frac{\b\left(\widehat{{\beta}_n'}\right)}{2}+\frac{1}{2},\et\\
{-\s}\left(\widehat{{\beta}_n'}\right)&=4n+2=\left\lceil\frac{8n+2}{2}+\frac{1}{2}\right\rceil
=\left\lceil\frac{\b\left(\widehat{{\beta}_n'}\right)}{2}+\frac{1}{2}\right\rceil.
\end{align*}
\end{prop}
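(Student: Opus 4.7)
The plan is to verify the Betti numbers using Bennequin's formula, extract the lower bounds on $-\s$ from the main results, and match those with upper bounds obtained by a direct computation. Each braid word in the statement uses all available Artin generators, so Bennequin's formula~\eqref{eq:b} gives $\b(\widehat{\beta})=\l(\beta)-b+1$; reading off $\l(\alpha_n)=\l(\beta_n)=8n+4$ and $\l(\widetilde{\alpha}_n)=\l(\widetilde{\beta}_n)=8n+5$ yields the displayed Betti numbers.

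For the lower bounds, Proposition~\ref{prop:1/2for3braids} gives $-\s(\widehat{\alpha_n})\geq 4n+2$ and $-\s(\widehat{\widetilde{\alpha}_n})\geq 4n+3$ directly. For the 4-braid families, substituting $a_1$ for every $a_3$ in $\beta_n$ yields $\alpha_n$ read now as a 4-braid word, whose closure is the split union $\widehat{\alpha_n}\sqcup U$ with an unknot $U$; likewise $\widetilde{\beta}_n$ reduces to $\widetilde{\alpha}_n\sqcup U$. The inequality $-\s(L)\geq -\s(L')-1$ from the proof of Theorem~\ref{thm:1/2for4braids} (with $L'$ the associated 3-braid closure) then gives $-\s(\widehat{\beta_n})\geq 4n+1$ and $-\s(\widehat{\widetilde{\beta}_n})\geq 4n+2$.

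For the matching upper bounds the plan is a direct Gordon-Litherland calculation. For each family we take the checkerboard surface $S$ of the standard braid diagram containing only bounded white regions and pick the basis of $H_1(S)$ given by their boundaries, exactly as in the proof of Theorem~\ref{thm:1/2for4braids}. The $(2n+1)$-fold periodicity of the braid word gives the Goeritz matrix $G$ a block-tridiagonal shape with identical interior blocks and small end corrections, plus a rank-one perturbation for the $\widetilde{\alpha}_n$ and $\widetilde{\beta}_n$ variants coming from the extra $a_2$. The signature $\s(G)$ is then computed by induction on $n$, appending one period of the defining pattern at a time, while $\mu(S)$ is the signed count of type-II crossings from~\eqref{eq:def:mu}; the identity $\s(\widehat{\beta})=\s(G)-\mu(S)$ yields the target upper bound.

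The main obstacle is the bookkeeping in the block-tridiagonal signature computation. A convenient shortcut for the 4-braid cases is to reuse the comparison of $G_{\beta_n}$ with $G_{\alpha_n}$ from the proof of Theorem~\ref{thm:1/2for4braids}: the two matrices differ only in their first row and column, and for these specific examples one verifies that the signature shift is exactly $+1$ (rather than merely of absolute value at most $1$), so that together with $\mu(S_{\beta_n})=\mu(S_{\alpha_n})$ the 4-braid upper bound follows immediately from the 3-braid one.
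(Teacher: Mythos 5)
Your first half is fine: the Betti numbers via~\eqref{eq:b} and the lower bounds via Proposition~\ref{prop:1/2for3braids} and (the proof of) Theorem~\ref{thm:1/2for4braids} are exactly what the paper uses. The problem is the upper bounds, which are the entire content of the proposition, and your proposal does not actually establish them. The ``block-tridiagonal signature computation by induction on $n$'' is a plan, not a proof: the signatures of these specific Goeritz matrices are precisely what has to be computed, and nothing is computed. Worse, your ``shortcut'' for the $4$-braid families runs the comparison of $G_{\beta_n}$ with $G_{\alpha_n}$ in the wrong direction. The inequality established in the proof of Theorem~\ref{thm:1/2for4braids} is ${-\s}(G_\beta)\geq{-\s}(G_\alpha)-1$; combined with $\mu(S_\beta)=\mu(S_\alpha)$ this converts a known value of ${-\s}(\widehat{\beta_n})$ into an \emph{upper} bound for ${-\s}(\widehat{\alpha_n})$, not the reverse. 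To deduce ${-\s}(\widehat{\beta_n})\leq 4n+1$ from ${-\s}(\widehat{\alpha_n})=4n+2$ you need $\s(G_{\beta_n})\geq\s(G_{\alpha_n})+1$, which you assert (``one verifies that the signature shift is exactly $+1$'') but never prove; it does not follow from the general first-row-and-column perturbation argument, and verifying it amounts to the very signature computation you are trying to avoid. So the argument is circular at its crux.

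The paper's route inverts your order of computation and thereby avoids all of this. It computes ${-\s}(\widehat{\beta_n})=4n+1$ directly in one line: $\widehat{\beta_n}$ is the unoriented boundary of the blackboard-framed annulus $A$ with core $T(2,2n+1)$, and Gordon--Litherland applied to this (non-checkerboard) spanning surface gives $\s(\widehat{\beta_n})=\s(G_A)+\overline{e}(A)/2=1-(4n+2)=-4n-1$. The value for $\widetilde{\beta}_n$ then follows by smoothing the single extra $a_2$-crossing together with~\eqref{eq:sigundersaddlemove}, and the values for $\alpha_n$ and $\widetilde{\alpha}_n$ follow from the comparison inequality used in the direction in which it was actually proved, namely $4n+1={-\s}(\widehat{\beta_n})\geq{-\s}(\widehat{\alpha_n})-1$. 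If you want to keep your architecture, you must either carry out the Goeritz matrix induction in full for the $3$-braid families \emph{and} separately prove the exact shift $\s(G_{\beta_n})=\s(G_{\alpha_n})+1$, or replace both by a direct computation of ${-\s}(\widehat{\beta_n})$ such as the annulus argument above.
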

Proposition~\ref{prop:examples} provides infinitely many examples of $3$--braids and $4$--braids of even and odd Betti number for which
the bounds of Theorem~\ref{thm:1/2for4braids} and Proposition~\ref{prop:1/2for3braids}, respectively, are realized. This proves that the bounds are optimal among all linear expressions in the Betti number with coefficients in the half integers. However, there is still room for improving the bounds from Theorem~\ref{thm:1/2for4braids} and Proposition~\ref{prop:1/2for3braids}.
For example, for closures of positive $3$--braids and $4$--braids (in fact, for all positive braid links) with Betti number $5$ or less, the Betti number equals $|\s|$ (see~\cite{Baader_14_posbraidsofmaxsig}); but this is not reflected in the bounds provided. More interestingly, what about other Betti numbers that do not occur as Betti numbers of the closures of the above families?
%\begin{Question} Let $f_b\colon \N\to\N$ be the minimum value of $|\s|$
%\end{Question}
\begin{proof}[Proof of Proposition~\ref{prop:examples}]
First, we calculate $\s\left(\widehat{\beta_n}\right)$. The link $\widehat{\beta_n}$ is the (unoriented) boundary of the embedded annulus $A\subset\R^3$ obtained from the blackboard-framed standard link diagram for the $T_{2,2n+1}$ torus knot. In other words, $A$ is the framed knot of knot type $T_{2,2n+1}$ and framing $-4n-2$ (where the zero-framing is identified with the homological framing). For the next bit, we use the terminology of~\cite{GordonLitherland_78_OnTheSigOfALink}: the Euler number $\overline{e}(A)$ equals twice the framing of $A$ and the signature $\s(A)=\s(G_A)$ equals $1$.
Therefore, we have
\[\s\left(\widehat{\beta_n}\right)=\s(G_A)+\frac{\overline{e}(A)}{2}=1-4n-2=-4n-1.\]

To calculate the signature of $\widehat{{\beta}_n'}$, we observe that ${\beta}_n'$ is obtained from ${\beta}_n$ by adding one generator $a_2$. In other words, we can smooth one crossing in $\widehat{{\beta}_n'}$ to obtain $\widehat{{\beta}_n}$. Therefore, we have
\[{-\s}\left(\widehat{{\beta}_n'}\right)\overset{\text{\eqref{eq:sigundersaddlemove}}}
{\leq}{-\s}\left(\widehat{{\beta}_n}\right)+1=4n+2,\]
which yields ${-\s}\left(\widehat{{\beta}_n'}\right)=4n+2$ since
${-\s}\left(\widehat{{\beta}_n'}\right)\geq4n+2$ by Theorem~\ref{thm:1/2for4braids}.

To calculate $\s\left(\widehat{{\alpha}_n}\right)$ and $\s\left(\widehat{{\alpha}_n'}\right)$, we note that
$\alpha_n$ and ${\alpha}_n'$ are obtained from $\beta_n$ and ${\beta}_n'$, respectively, by replacing all generators $a_3$ with $a_1$. Following the argument in the proof of Theorem~\ref{thm:1/2for4braids}, this yields
\begin{equation}\label{eq:alphaviabeta}4n+1={-\s}\left(\widehat{{\beta}_n}\right)\geq{-\s}\left(\widehat{{\alpha}_n}\right)-1\et 4n+2={-\s}\left(\widehat{{\beta}_n'}\right)\geq {-\s}\left(\widehat{{\alpha}_n'}\right)-1.\end{equation}
This finishes the proof since the inequalities in~\eqref{eq:alphaviabeta} are equalities by Proposition~\ref{prop:1/2for3braids}.

\end{proof}
\appendix
\section{Reduction of Theorem~\ref{thm:1/8} to Theorem~\ref{thm:1/2for4braids}}
By~\cite[Reduction Lemma]{Feller_14_Sigofposbraids}, Theorem~\ref{thm:1/2for4braids} implies Theorem~\ref{thm:1/8}.
For the reader's convenience and the sake of completeness, we recall the argument.
The idea of the proof is first to smooth crossings in a given positive braid link such that a connected sum of closures of positive braids on $4$ or fewer strands remains, and then to apply Theorem~\ref{thm:1/2for4braids} to these summands.
\begin{proof}[Proof of Theorem~\ref{thm:1/8}]
We fix a positive integer $n$ and
let $\beta$ be a non-trivial positive braid in $B_n$.
Without loss of generality,
$\beta$ is not a non-trivial split union of links; in other words,
every generator $ a_{i}$ with $1\leq i\leq n-1$ is contained in $\beta$ at least once.

For $i$ in $\{1,2,3,4\}$, we denote by $\beta(i)$ the braid obtained from $\beta$ by smoothing the crossings corresponding to all but one (say the leftmost) $a_k$ for all $k$ in $\{i,i+4,i+8,i+12,\ldots \}$ as
illustrated in Figure~\ref{fig:Cutting4strands}.
\begin{figure}[h]
\centering
\def\svgscale{0.7}
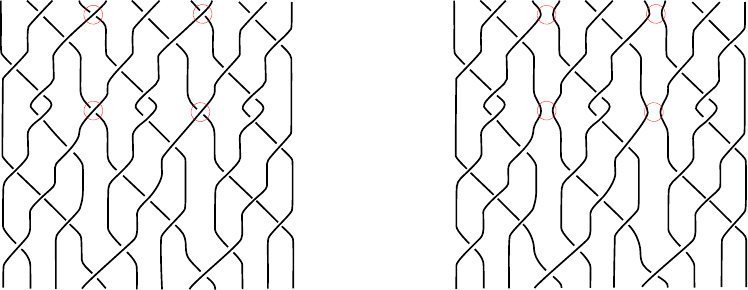
\caption{A diagram of a positive $12$--braid $\beta$ (left) with indications (red) which crossings to smooth to obtain $\beta(4)$ (right). The closure of $\beta(4)$ is a connected sum of the closures of three $4$--braids.}
\label{fig:Cutting4strands}
\end{figure}
The closure of such a $\beta(i)$ is a connected sum of closures of positive braids on $4$ or fewer strands.
Since we have $\b(\widehat{\beta})\overset{\eqref{eq:b}}{=}\sum_{k=1}^{n-1}(\sharp\{ a_k\text{ in }\beta\}-1)$, there exists an $i$ such that \begin{equation}\label{eq:bettibetai}
\b(\widehat{\beta(i)})\geq\frac{3}{4}\b(\widehat{\beta}).\end{equation} We fix such an $i$.
Let $L_1,\dots, L_l$ be closures of positive braids on at most $4$ strands such that the closure of $\beta(i)$ is the connected sum of
the $L_j$.
Therefore,
we have
\begin{align*}
{-\s}(\widehat{\beta(i)})\overset{\eqref{eq:sigisadd}}
{=} -\sum_{j=1}^{l}\s(L_j)
\overset{\text{Theorem~\ref{thm:1/2for4braids}}}{>}\sum_{j=1}^{l}\frac{\b(L_j)}{2}
{=} \frac{\b(\widehat{\beta(i)})}{2}
\overset{\eqref{eq:bettibetai}}{\geq} \frac{3\b(\widehat{\beta})}{8}.
\end{align*}
The braid $\beta(i)$ is obtained from $\beta$ by smoothing $\b(\widehat{\beta})-\b(\widehat{\beta(i)})\leq\frac{1}{4}\b(\widehat{\beta})$ crossings.
By~\eqref{eq:sigundersaddlemove}, smoothing a crossing changes the signature by at most $\pm1$;
thus, we get
\begin{align*}
{-\s}(\widehat{\beta})
\geq
-\frac{1}{4}\b(\widehat{\beta}) {-\s}(\widehat{\beta(i)})
>
-\frac{1}{4}\b(\widehat{\beta}) + \frac{3\b(\widehat{\beta})}{8}
 =
\frac{\b(\widehat{\beta})}{8}.
\end{align*}
\end{proof}
\bibliographystyle{alpha}

%\bibliography{peterbib}
\def\cprime{$'$}

\end{document}